\documentclass[11pt]{article}
\usepackage[a4paper,margin=30mm]{geometry}
\usepackage{amsmath,amssymb,amsthm,mathtools}
\usepackage[hidelinks]{hyperref}
\usepackage{microtype}
\usepackage{tikz}
\usepackage{pgfplots}
\pgfplotsset{compat=1.18}
\usetikzlibrary{patterns}

\newtheorem{theorem}{Theorem}
\newtheorem{proposition}[theorem]{Proposition}
\newtheorem{lemma}[theorem]{Lemma}
\newtheorem{corollary}[theorem]{Corollary}
\theoremstyle{definition}
\newtheorem{definition}[theorem]{Definition}
\newtheorem{remark}[theorem]{Remark}

\newcommand{\R}{\mathbb{R}}
\newcommand{\Z}{\mathbb{Z}}
\newcommand{\Sone}{\mathbb{S}^{1}}
\newcommand{\conv}{\operatorname{conv}}
\newcommand{\CR}{\operatorname{CR}}
\newcommand{\Var}{\operatorname{Var}}

\title{A New Unconditional Lower Bound for Shoreline Search}
\author{Alexander Temerev\\ University of Geneva}
\date{August 2026}

\begin{document}
\maketitle

\begin{abstract}
A unit-speed searcher starts at the origin of the Euclidean plane and must hit
an unknown straight line.  Neither the direction of the line nor its distance
from the origin is known.  We prove that every deterministic search path has
competitive ratio at least
\[
  C_{\log}\approx12.5937096701246675.
\]
The bound is unconditional within the class of deterministic paths: the path
need not be cyclic, self-similar, spiral-like, or monotone in angle.  The proof
compares each one-dimensional
projection of the path with a zigzag whose alternating record turns have been
sorted, reads the resulting completion constraints as jobs with
scale-dependent deadlines, and lower-bounds their activity in logarithmic
time.  Averaging over all projection directions then meets the exact Euclidean
velocity budget.  Specialised to one dimension the same argument recovers the
cow-path constant~$9$ exactly.  Arb ball arithmetic encloses the single
numerical constant used in the theorem.
\end{abstract}

\noindent\textbf{MSC 2020:} 90B40, 68W27, 52A10.\quad
\textbf{Keywords:} shoreline search, online search, competitive ratio,
logarithmic spiral, linear search, lower bound.

\section{Introduction}

In shoreline search, a unit-speed searcher begins at a known point while a
straight target line has unknown direction and unknown distance.  If the line
is at distance $d$, an omniscient searcher pays $d$; an online path is judged
by the largest ratio between its first-hitting time and $d$.

The best known path is a logarithmic spiral with competitive ratio
$13.8111\ldots$ \cite{baezayates1988,baezayates1993,finchzhu2005}; its
optimality among all paths is open.  The strongest unconditional lower bound
is $6.3972\ldots$, which is Isbell's optimal constant for the easier problem
in which the distance of the line is known in advance \cite{isbell1957}.  The
value $12.5385\ldots$ is known only under a cyclic (spiral-like) restriction
on the path, where it is inherited from the axis-parallel shoreline problem
\cite{langetepe2012}; see
\cite{acharjee2020,dobrev2020,georgiou2026} for recent accounts and related
lower-bound work, and \cite{gal1980,alperngal2003} for the search-game
background.
Our result is the following.

\begin{theorem}[unconditional lower bound]\label{thm:main}
For every $d_0>0$, every deterministic path which searches all straight lines
at distances $d\geq d_0$ satisfies
\[
  \CR_{d_0}\geq C_{\log}>12.5937.
\]
Here $C_{\log}$ is the unique constant satisfying
$G(C_{\log})=2/\pi$, for the function $G$ defined in~\eqref{eq:Gdef};
numerically $C_{\log}\approx12.5937096701246675$, and
Section~\ref{sec:numerics} certifies
\[
 12.59370967012466<C_{\log}<12.59370967012468.
\]
No cyclicity or other ordering assumption is imposed on the planar path.
\end{theorem}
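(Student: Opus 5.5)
The plan follows the route announced in the abstract: reduce to one-dimensional projections, lower-bound each projection's cost by a sorted-zigzag / deadline-scheduling argument in logarithmic time, then average over directions against the Euclidean speed budget.

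\emph{Setup and projection.} Suppose a path $\gamma:[0,\infty)\to\R^2$ with $|\dot\gamma|\le1$ has $\CR_{d_0}=C<C_{\log}$. For a unit vector $u_\theta=(\cos\theta,\sin\theta)$ write $p_\theta(t)=\langle\gamma(t),u_\theta\rangle$. The line $\{x:\langle x,u_\theta\rangle=d\}$ is hit exactly when $p_\theta$ first reaches $d$. The hypothesis therefore says that, for every $\theta$, the scalar path $p_\theta$ solves a one-sided linear-search constraint: it reaches level $d$ by time $Cd$, for all $d\ge d_0$. Since $\theta$ and $\theta+\pi$ are both covered, each line through the origin carries a genuine two-sided cow-path problem for $p_\theta$.

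\emph{Per-direction lower bound on activity.} For fixed $\theta$, replace $p_\theta$ by the zigzag through its alternating record extrema. Sorting the record turns can only help: it does not increase time and preserves which levels are reached when. Each new record level $x$ on one side imposes a deadline $Cx$ on reaching the next needed level on the other side. I would rewrite these constraints in logarithmic time $s=\log t$ and logarithmic scale as jobs with scale-dependent deadlines. The goal is the inequality
\[
\liminf_{T\to\infty}\frac{1}{\log T}\int_{T_0}^{T}\frac{|\dot p_\theta(t)|}{\,\cdot\,}\,dt\;\ge\;g(C),
\]
or more concretely a lower bound on the amount of logarithmic time during which $p_\theta$ must be "active", meaning moving toward a record at a rate comparable to its scale. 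The density $g(C)$ is explicit and comes from optimising a geometric (self-similar) zigzag. The check that this is correct is that at $g=1$, i.e.\ full one-dimensional speed, it gives the threshold $C=9$. In the plane, however, the relevant speed weight is $|\langle\dot\gamma,u_\theta\rangle|$, which averages to $2/\pi$ over $\theta$.

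\emph{Averaging and conclusion.} Integrate the per-direction bound over $\theta\in[0,\pi)$ and exchange the order of integration. Because $\frac1\pi\int_0^\pi|\langle v,u_\theta\rangle|\,d\theta=\frac2\pi|v|\le\frac2\pi$, the averaged activity is at most $2/\pi$ per unit time. The lower bound gives at least $G(C)$, where $G$ is the function of~\eqref{eq:Gdef}, presumably the direction-averaged version of $g$. So $G(C)\le 2/\pi$. Monotonicity of $G$ then forces $C\ge C_{\log}$, and the numerical enclosure of Section~\ref{sec:numerics} gives $C_{\log}>12.5937$. Working with $\liminf$ over long logarithmic windows removes any dependence on $d_0$ and on initial transients.

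\emph{Main obstacle.} The hard step is the per-direction scheduling lemma. It must hold for arbitrary, non-periodic zigzags, and its bound must be exactly compatible with the $|\langle\dot\gamma,u\rangle|$ weighting, so that averaging loses nothing. My first attempt would be an exchange argument showing the sorted zigzag is extremal. After that I would use an averaging/Jensen inequality over scales in log time, showing that the deadline constraints force the fraction of active log-time to be at least the value attained by the geometric strategy, whose ratio is tuned to $C$.
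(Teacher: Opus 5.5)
You have the paper's architecture: disk deadlines for every projection, a per-direction lower bound on the log-time activity $\int|\langle u,\gamma'\rangle|\,dt/t$, integration over $\theta$ against $\int_0^\pi|\langle u_\theta,v\rangle|\,d\theta=2\|v\|$, and monotonicity of $G$. Your $\liminf$ formulation is sound for the averaging step, because Fatou's lemma gives $\int_0^\pi\liminf\le\liminf\int_0^\pi$, which is the direction you need. That even avoids the paper's $u$-uniform constant $B_C$. But the two steps that carry the proof are missing, and the one concrete method you propose for the first would fail.

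\emph{The sorting step.} Sorting does \emph{not} preserve which levels are reached when, and an exchange argument does not work. The adjacent exchange $(-p,+A,-b,+c)\mapsto(-p,+b,-A,+c)$ is harmless when $p<b<A<c$. After one exchange, however, the sequence is no longer a record skeleton, and later inversions need not satisfy these inequalities. For the skeleton $+10,-1,+11,-2,+12$, every exchange sequence reaching sorted order passes through an uncovered exchange. For example, passing from $+1,-10,+11,-2,+12$ to $+1,-10,+2,-11,+12$ raises the completion variation at $r=3$ from $25$ to $51$.

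What is needed instead is a direct, radius-by-radius comparison of completion \emph{variation} (not time): $W(r)\ge r+2S_{i(r)}$, with $S_i$ the partial sums of the sorted turn magnitudes. The paper proves it by locating $\tau(r)$ in a phase $j$. Then every turn of magnitude $<r$ has index $<j$. The earlier first passage of the opposite level $\mp r$ lies in an earlier phase whose turn has magnitude $\ge a_{i(r)}$. Finally, each leg between consecutive turns costs at least $m_{k-1}+m_k$.

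\emph{The per-direction density.} Your $g(C)$ is never identified, and optimising over geometric zigzags does not by itself bound non-geometric ones. The missing chain runs as follows.
\begin{enumerate}
\item With $\tau(r)\le Cr$, the normal form gives the workload $V_u(t)\ge t/C+\sum_{D_i\le t}J_i$, with jobs $D_i=Ca_{i-1}$ and $J_i=2a_i$.
\item The term $t/C$ is a baseline rate $1/C$, and the excess has rate at most $R=1-1/C$.
\item Because $1/t$ decreases, the cheapest delivery of $J_i$ is at rate $R$ on $[D_i-J_i/R,D_i]$. This costs $Rh_C(q_i)$, with $q_i=a_i/a_{i-1}$.
\item Making step 3 rigorous needs two ingredients. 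An earliest-deadline allocation of a monotone envelope ensures that jobs do not share capacity. The a priori bound $q_i\le(C-3)/2$ ensures $D_i-J_i/R\ge\lambda D_i>0$.
\item Arbitrary record factors are then handled by the pointwise bound $h_C(q)\ge m(C)\log q$ and by telescoping $\sum\log q_i$ over the window. Your Jensen idea reduces to this once you also optimise over the number of jobs.
\end{enumerate}
This yields $G(C)=1/C+(1-1/C)m(C)$ as the \emph{per-direction} density, not a direction average; $2/\pi$ enters only through the budget. Without these steps the proposal does not determine $C_{\log}$.
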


The word ``unconditional'' means that no structural restriction is placed on
the deterministic path; randomized search is not considered here.  For every
fixed direction we shall compare the scalar projection of the path with a
sorted zigzag.  These sorted zigzags are comparison objects and may differ
from one direction to another; we never replace the planar path by one common
cyclic path.

The proof has four steps.  Competitive search gives a disk-completion deadline
at every scale.  In each scalar projection, the completion variation is
bounded below by that of the zigzag whose alternating record turns are sorted
by magnitude.  The sorted record turns become jobs which must be processed
before geometric deadlines.  A finite-window scheduling lemma lower-bounds
their logarithmic-time activity.  Finally, the exact identity
\[
  \int_0^\pi |\langle (\cos\theta,\sin\theta),v\rangle|\,d\theta
  =2\|v\|
\]
adds those directional lower bounds using precisely the available Euclidean
speed.  Section~\ref{sec:checks} records three consistency checks: the
argument reproduces the cow-path constant $9$ exactly in one dimension, gives
a value below Langetepe's optimal cyclic constant for the axis-parallel
problem, and is satisfied with room to spare by the logarithmic spiral.

\section{The search problem and disk deadlines}

Let $\gamma:[0,\infty)\to\R^2$ be a continuous, locally rectifiable path with
$\gamma(0)=0$.  A path that searches all the prescribed lines has infinite
length and admits an arclength parametrization.  Any admissible
$1$-Lipschitz presentation with stops reaches each point no earlier than its
arclength factorization, so it can only have a larger competitive ratio.  We
may therefore take $\gamma$ to be locally absolutely continuous and
parametrized by arclength, with $\|\gamma'(t)\|=1$ almost everywhere.  For
$u\in\Sone$ and $d>0$, put
\[
 L(u,d)=\{x\in\R^2:\langle u,x\rangle=d\},
 \qquad
 T_\gamma(u,d)=\inf\{t\geq0:\gamma(t)\in L(u,d)\}.
\]
For a fixed lower distance cutoff $d_0>0$, define
\[
 \CR_{d_0}(\gamma)
 =\sup_{u\in\Sone,\ d\geq d_0}\frac{T_\gamma(u,d)}d.
\]
This is the standard nondegenerate formulation; the value of $d_0$ only fixes
an initial length scale.

Write
\[
 K_t=\conv\gamma([0,t]),
 \qquad
 h_t(u)=\max_{0\leq s\leq t}\langle u,\gamma(s)\rangle.
\]

\begin{proposition}[disk deadline]\label{prop:diskdeadline}
If $\CR_{d_0}(\gamma)\leq C$, then for every $t\geq Cd_0$,
\[
  \frac tC B_2\subseteq K_t.
\]
Equivalently, for every $u\in\Sone$, the scalar projection
$x_u(s)=\langle u,\gamma(s)\rangle$ has visited both $t/C$ and $-t/C$ by
time $t$.
\end{proposition}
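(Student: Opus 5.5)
The plan is to read off the deadline directly from the definition of $\CR_{d_0}$, one line at a time, and then use the support-function description of convex bodies to pass from lines to the disk. Fix $t\geq Cd_0$ and put $d=t/C$, so that $d\geq d_0$. For every $u\in\Sone$ the line $L(u,d)$ is one of the searched lines. Hence $T_\gamma(u,d)\leq Cd=t$.

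The first point to settle is that the infimum defining $T_\gamma(u,d)$ is attained. The set $\{s:\gamma(s)\in L(u,d)\}$ is closed because $\gamma$ is continuous, and it is nonempty because $T_\gamma(u,d)$ is finite. So there is a time $s\leq t$ with $\langle u,\gamma(s)\rangle=d$. Applying the same argument to $-u$ gives a time $s'\leq t$ with $\langle u,\gamma(s')\rangle=-d$. This already proves the second formulation: $x_u$ visits both $\pm t/C$ by time $t$. In particular $h_t(u)\geq t/C$ for all $u\in\Sone$.

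For the inclusion $\frac tC B_2\subseteq K_t$, use the fact that $K_t$ is compact and convex, since it is the convex hull of the compact set $\gamma([0,t])$. A compact convex set equals the intersection of its supporting half-planes $\{x:\langle u,x\rangle\leq h_t(u)\}$, where $h_t$ is also the support function of $K_t$. Any $x$ with $\|x\|\leq t/C$ satisfies $\langle u,x\rangle\leq t/C\leq h_t(u)$ for every $u$, so $x\in K_t$. If one prefers to avoid the separation theorem, argue by contradiction: a point outside $K_t$ is strictly separated from it by some unit $u$, which contradicts $h_t(u)\geq t/C$.

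For the converse direction of the equivalence, assume $\frac tC B_2\subseteq K_t$. Then $h_t(u)\geq t/C$, because the support function of $K_t$ equals the maximum of $\langle u,\cdot\rangle$ over $\gamma([0,t])$. The same holds for $-u$, so $x_u$ reaches values at least $t/C$ and at most $-t/C$ on $[0,t]$. Since $x_u(0)=0$, the intermediate value theorem gives times at which $x_u$ equals $\pm t/C$ exactly.

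Nothing here is a real obstacle. The only subtle steps are the attainment of the infimum, which needs continuity and finiteness of $T_\gamma$, and the condition $t\geq Cd_0$, which guarantees that the distance $t/C$ lies in the range of lines the path is required to search.
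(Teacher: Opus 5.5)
Your proof is correct and follows the paper's route. You set $d=t/C\geq d_0$, use competitiveness to see that each line $L(u,d)$ is hit by time $t$, which gives $h_t(u)\geq d$, and then apply the support-function criterion to get $dB_2\subseteq K_t$, reading off the scalar statement from $u$ and $-u$. Beyond that you only add details the paper leaves implicit, namely attainment of the infimum in $T_\gamma$ and the converse via the intermediate value theorem, although your ``contradiction'' alternative still uses strict separation and so does not actually avoid the separation theorem.
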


\begin{proof}
Set $d=t/C\geq d_0$.  By $C$-competitiveness, every line $L(u,d)$ has been
hit by time $Cd=t$.  Continuity and $x_u(0)=0$ say that this is equivalent to
$h_t(u)\geq d$.  Since the same statement holds for every $u$, the
support-function criterion for convex inclusion gives $dB_2\subseteq K_t$.
Applying the support inequalities to $u$ and $-u$ gives the scalar statement.
\end{proof}

For a fixed $u$, let
\[
 V_u(t)=\int_0^t|\langle u,\gamma'(s)\rangle|\,ds
\]
be the total variation of the projection $x_u$ on $[0,t]$.  Then $V_u$ is
absolutely continuous, $0\leq V_u'\leq1$ almost everywhere, and
$V_u(t)\leq t$.  Proposition~\ref{prop:diskdeadline} says that $x_u$ has
visited both $r$ and $-r$ by time $Cr$ for every $r\geq d_0$; the next
section turns this into a lower bound for $V_u$.

Fixing any direction already contains the classical cow-path problem, so
every competitive constant satisfies
\begin{equation}\label{eq:cowpath}
  C\geq9,
\end{equation}
also when target distances are restricted to $d\geq d_0$
\cite{beck1964,becknewman1970}.  We shall not rely on
\eqref{eq:cowpath}: the elementary bound \eqref{eq:factorbound} below already
gives $C\geq5$, which is all that the argument needs, and
Section~\ref{sec:checks} shows that the argument reproduces
\eqref{eq:cowpath} on its own.

\section{Alternating records and a scalar normal form}\label{sec:normalform}

Throughout this section $x:[0,\infty)\to\R$ is continuous, of bounded
variation on compact intervals, $x(0)=0$, and $x$ reaches both $r$ and $-r$
in finite time for every $r\geq d_0$; by continuity it then reaches every
level in $[-r,r]$.  In the application, $x=x_u$ is a projection of $\gamma$
in ambient time.  Write
\[
 M(t)=\max_{s\leq t}x(s),\qquad N(t)=\max_{s\leq t}\bigl(-x(s)\bigr),
\]
\[
 \tau(r)=\inf\{t: M(t)\geq r\text{ and }N(t)\geq r\},
 \qquad
 W(r)=\Var\bigl(x;[0,\tau(r)]\bigr),
\]
the \emph{completion time} and \emph{completion variation} of radius $r$.
For $y>0$ let $\sigma^{+}(y)$ and $\sigma^{-}(y)$ be the first passage times
of the levels $y$ and $-y$; both are strictly increasing in $y$, and
$\tau(r)=\max\{\sigma^{+}(r),\sigma^{-}(r)\}$.  For $x=x_u$,
Proposition~\ref{prop:diskdeadline} gives $\tau(r)\leq Cr$, hence
\begin{equation}\label{eq:completionbound}
  W(r)=\Var\bigl(x_u;[0,\tau(r)]\bigr)\leq\Var\bigl(x_u;[0,Cr]\bigr)
  =V_u(Cr)\qquad(r\geq d_0).
\end{equation}

The idea of this section is simple.  Follow the running maximum $M$ and the
running minimum $-N$: the path alternately extends one of them, and each
switch from one side to the other is a \emph{turn}.  Between consecutive
turns of magnitudes $m_{k-1}$ and $m_k$ the path crosses zero and spends
variation at least $m_{k-1}+m_k$.  Sorting the turn magnitudes gives the
cheapest conceivable schedule for completing every radius, and
Lemma~\ref{lem:normalform} shows that the actual path is never cheaper.  The
only technical point is that a general continuous path may have infinitely
many small turns accumulating at its departure from the origin; the
definition below and Lemma~\ref{lem:records}, proved in
Appendix~\ref{app:turns}, take care of this.  Figure~\ref{fig:turns} illustrates the construction.

\begin{figure}[t]
\centering
\begin{tikzpicture}[xscale=0.62,yscale=0.5]
 \fill[gray!12] (2.4,-2.3) rectangle (4.68,4.9);
 \fill[gray!12] (6.85,-2.3) rectangle (8.23,4.9);
 \draw[->] (0,0) -- (10.2,0) node[below] {$t$};
 \draw[->] (0,-2.3) -- (0,5.0) node[left] {$x$};
 \draw[dashed,thick,blue!60] (0,0) -- (1,2) -- (4.68,2) -- (5,3) -- (8.23,3) -- (8.5,4) -- (9.8,4);
 \draw[dashed,thick,red!60] (0,0) -- (2.4,0) -- (3,-1) -- (6.85,-1) -- (7,-1.5) -- (9.8,-1.5);
 \node[blue!70,right,font=\small] at (9.8,4) {$M$};
 \node[red!70,right,font=\small] at (9.8,-1.5) {$-N$};
 \draw[very thick] (0,0) -- (1,2) -- (1.5,0.9) -- (1.9,1.5) -- (2.4,0) -- (3,-1)
   -- (3.4,-0.3) -- (3.8,-0.8) -- (5,3) -- (5.5,1.2) -- (5.9,2.2) -- (7,-1.5)
   -- (8.5,4) -- (9.3,2);
 \foreach \x/\y/\lab/\pos in {1/2/{m_1=2}/above, 3/-1/{m_2=1}/below, 5/3/{m_3=3}/above,
                              7/-1.5/{m_4=1.5}/below, 8.5/4/{m_5=4}/above}
   { \fill (\x,\y) circle (2.2pt); \node[\pos,font=\small] at (\x,\y) {$\lab$}; }
 \foreach \x/\lab/\pos in {1/t_1/below,3/t_2/above,5/t_3/below,7/t_4/above,8.5/t_5/below}
   { \draw (\x,0.12) -- (\x,-0.12); \node[\pos=1pt,font=\scriptsize] at (\x,0) {$\lab$}; }
\end{tikzpicture}
\hfill
\begin{tikzpicture}[xscale=0.255,yscale=0.5]
 \draw[->] (0,0) -- (21.5,0) node[below] {$v$};
 \draw[->] (0,-3.6) -- (0,5.0) node[left] {$x$};
 \draw[very thick] (0,0) -- (1,1) -- (3.5,-1.5) -- (7,2) -- (12,-3) -- (19,4) -- (20.5,2.5);
 \foreach \x/\y/\lab/\pos in {1/1/{a_1}/above, 3.5/-1.5/{a_2}/below, 7/2/{a_3}/{left}, 12/-3/{a_4}/below, 19/4/{a_5}/above}
   { \fill (\x,\y) circle (2.2pt); \node[\pos,font=\small] at (\x,\y) {$\lab$}; }
 \draw[dashed] (0,2.5) -- (21,2.5) node[right,font=\small] {$r$};
 \draw[dashed] (0,-2.5) -- (21,-2.5) node[right,font=\small] {$-r$};
 \fill[red!60] (11.5,-2.5) circle (2.2pt);
 \fill[red] (17.5,2.5) circle (2.6pt);
 \draw[red,thick] (17.5,2.5) -- (17.5,0);
 \node[red,anchor=north,align=center,font=\scriptsize] at (17.5,-0.5) {$W_*(r)$\\$=r+2S_4$};
\end{tikzpicture}
\caption{Left: a scalar path with its running maximum $M$ and minimum $-N$
(dashed).  A phase is a maximal stretch during which only one side sets
records; the negative phases are shaded (they end when the path first
exceeds the old positive record).  The turns $t_k$ are the last record times
of the phases; the wiggles between them set no records and are ignored.
Right: the sorted zigzag, with $a_1\leq a_2\leq\cdots$ the sorted turn
magnitudes ($1,1.5,2,3,4$), plotted against variation $v$.  For
$a_3<r\leq a_4$ both levels $\pm r$ have been reached after variation
$r+2S_4$; Lemma~\ref{lem:normalform} says that the original path needs at
least that much.}
\label{fig:turns}
\end{figure}

\begin{definition}[record times, phases, turns]\label{def:turns}
A time $t>0$ is a \emph{positive record time} if $x(t)=M(t)>M(s)$ for all
$s<t$, i.e.\ $t=\sigma^{+}(x(t))$; \emph{negative record times} are defined
with $-x$ and $N$.  Let $P$ and $Q$ be the sets of positive and negative
record times; they are disjoint, since $x>0$ on $P$ and $x<0$ on $Q$.  Let
$t_{\mathrm{dep}}=\sup\{t:x=0\text{ on }[0,t]\}$ be the departure time.  For
$t>t_{\mathrm{dep}}$ let $\rho(t)$ be the last record time in
$(t_{\mathrm{dep}},t]$ (it exists by Lemma~\ref{lem:records}(i)), and call
$t$ \emph{positive} or \emph{negative} according to the sign of $\rho(t)$.
A \emph{phase} is a maximal interval of $(t_{\mathrm{dep}},\infty)$ on
which the sign is constant.  These maximal intervals are pairwise disjoint
and partition $(t_{\mathrm{dep}},\infty)$, because every time in that set has
exactly one sign.  The \emph{turn} of a phase is its last record time $t_k$,
write $\varepsilon_k\in\{+,-\}$ for the sign of that phase, and let
$m_k=|x(t_k)|$ be the \emph{turn magnitude}.  In particular, $t_k$ is the
first passage time of the signed level $\varepsilon_km_k$.
\end{definition}

\begin{lemma}[turns]\label{lem:records}
Under the standing assumptions of this section:
\begin{enumerate}
\item[(i)] For every $t>t_{\mathrm{dep}}$ the set of record times in
$(t_{\mathrm{dep}},t]$ has a largest element $\rho(t)$, which lies in the
phase of $t$.  Every phase contains record times and has a last record time.
\item[(ii)] Only finitely many phases meet a compact subinterval of
$(t_{\mathrm{dep}},\infty)$.  Hence the phases are indexed in temporal order
by an interval $K\subseteq\Z$, which is unbounded above; if $K$ is unbounded
below then $t_k\downarrow t_{\mathrm{dep}}$ as $k\to-\infty$.
\item[(iii)] Consecutive phases have opposite signs, so the signs
$\varepsilon_k$ of the turns alternate and $x(t_k)=\varepsilon_km_k$.
\item[(iv)] On each side the turn magnitudes are strictly increasing, and
$m_k\to\infty$ as $k\to\infty$.
\item[(v)] $\Var(x;[t_{k-1},t_k])\geq m_{k-1}+m_k$.
\item[(vi)] For $\varepsilon\in\{+,-\}$ and $r>0$, the first passage time
$\sigma^{\varepsilon}(r)$ is a record time of sign $\varepsilon$; if it lies
in phase $j$ then $\varepsilon_j=\varepsilon$, $m_j\geq r$,
$t_{j-1}<\sigma^{\varepsilon}(r)\leq t_j$, and every turn of sign
$\varepsilon$ with index $<j$ has magnitude $<r$.
\item[(vii)] For $0<\varepsilon_0<X$ only finitely many turns have magnitude
in $[\varepsilon_0,X]$; if $K$ is unbounded below then $m_k\to0$ as
$k\to-\infty$; and $\sum_{k:\,t_k\leq T}m_k\leq\Var(x;[0,T])$ for every $T$.
\end{enumerate}
\end{lemma}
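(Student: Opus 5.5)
The plan is to derive everything from two elementary facts about first passages. First, for $y>0$ the time $\sigma^{\pm}(y)$ is a record time of sign $\pm$: by continuity $x(\sigma^+(y))=y$, and $x(s)<y$ for $s<\sigma^+(y)$. Second, if $M(t)>0$ then the last positive record time in $[0,t]$ is exactly $\sigma^+(M(t))$, and symmetrically for negative records. For (i), fix $t>t_{\mathrm{dep}}$. Then $x$ is not identically zero on $[0,t]$, so $\max\{M(t),N(t)\}>0$. Set
\[
\rho(t)=\max\{\sigma^+(M(t)),\sigma^-(N(t))\},
\]
keeping only the terms with positive level. By the second fact this is the largest record time in $(t_{\mathrm{dep}},t]$; it lies beyond $t_{\mathrm{dep}}$ because $x(\rho(t))\neq0$. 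For every $s\in[\rho(t),t]$ we have $\rho(s)=\rho(t)$, so $s$ has the same sign as $t$; hence $[\rho(t),t]$ sits in the phase of $t$. Thus every phase contains record times. If a phase $I$ had no last record time, its record times would accumulate at $\sup I$, or tend to $+\infty$. In the first case continuity gives a record of the same sign at $\sup I$, which would still belong to $I$, a contradiction. The second case is excluded once (ii) shows the next phase exists, since infinitely many records of the other sign occur.

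For (ii), fix $[a,b]$ with $a>t_{\mathrm{dep}}$ and put $\delta=\max\{M(a),N(a)\}>0$. The plan is to show that every two successive sign changes inside $[a,b]$ cost variation at least $\delta$. Suppose the sign switches from $+$ to $-$ after $a$. The last positive record before the switch has height $\geq M(a)$, because once $M(a)>0$ any later positive record exceeds $M(a)$. The first negative record after the switch is $<0$. So that stretch has variation $\geq M(a)$, and symmetrically for switches from $-$ to $+$ with $N(a)$. Over two consecutive switches at least one of these bounds equals $\delta$ (the case where one of $M(a),N(a)$ is zero is handled by skipping the first switch). Since $\Var(x;[0,b])<\infty$, only finitely many phases meet $[a,b]$. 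The phases therefore form a discrete, linearly ordered family, which I index by an interval $K\subseteq\Z$. It is unbounded above because both levels $\pm r$ are reached for every $r$, which forces records of both signs arbitrarily late. If $K$ is unbounded below, the turns can only accumulate at $t_{\mathrm{dep}}$, so $t_k\downarrow t_{\mathrm{dep}}$.

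Items (iii)–(vii) then follow quickly. (iii) holds by maximality: two adjacent phases of equal sign would merge. For (iv), records of a fixed sign have strictly increasing magnitude, and every level is eventually passed, which gives $m_k\to\infty$. For (v), $x(t_{k-1})=\varepsilon_{k-1}m_{k-1}$ and $x(t_k)=-\varepsilon_{k-1}m_k$, so the increment alone is $m_{k-1}+m_k$. For (vi), by the first fact $\sigma^\varepsilon(r)$ is its own $\rho$, so its phase $j$ has sign $\varepsilon$. The turn $t_j$ is the last record of that phase, so it lies at or after $\sigma^\varepsilon(r)$ and is a later same-sign record, giving $m_j\geq r$. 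Earlier turns of sign $\varepsilon$ are records before the first passage of level $r$, hence have magnitude $<r$; and $t_{j-1}<\sigma^\varepsilon(r)$ because $t_{j-1}$ lies in an earlier phase. For (vii), summing (v) over consecutive turns with $t_k\leq T$ gives $\sum_k m_k\leq\Var(x;[0,T])$. Combined with strict monotonicity on each side, this bounds the number of turns in $[\varepsilon_0,X]$, and it forces $m_k\to0$ as $k\to-\infty$.

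The main obstacle I expect is (i)–(ii) near $t_{\mathrm{dep}}$. There infinitely many phases may accumulate, and one must check carefully that phases really are intervals with a last record time, and not half-open at a limit point of records. I would handle this uniformly through the explicit formula for $\rho(t)$ in terms of first passages, so that no limiting argument is needed except continuity at $\sup I$.
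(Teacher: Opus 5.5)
Your proposal is correct and is essentially the paper's proof. Both use the same alternation-counting argument with the levels $M(a)$ and $N(a)$ for local finiteness in (ii) and derive (iii)--(vi) the same way; the differences are small improvements, namely the explicit formula $\rho(t)=\max\{\sigma^+(M(t)),\sigma^-(N(t))\}$ in place of the paper's supremum-plus-limit argument in (i), and obtaining $m_k\to0$ in (vii) from convergence of $\sum_k m_k$ rather than from $m_k\le M(t_k)+N(t_k)\to0$. One wording fix: a phase need not have a \emph{first} record time (it is open at its left end), so in (ii) measure each switch up to any record of the next phase, e.g.\ its turn, which still keeps the stretches disjoint.
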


By Lemma~\ref{lem:records}(vii) the multiset of turn magnitudes can be
enumerated in nondecreasing order,
\[
 (a_i)_{i\in I},\qquad a_i\leq a_{i+1},\qquad I\subseteq\Z
 \text{ an interval unbounded above},
\]
ties being broken arbitrarily, and the partial sums
$S_i=\sum_{j\in I,\,j\leq i}a_j$ are finite, because all turns of magnitude
$\leq a_i$ occur by time $\tau(a_i)$ and Lemma~\ref{lem:records}(vii) applies.
We call $(a_i)$ the
\emph{sorted record sequence} of $x$.%
\footnote{One might try instead to sort the
turns by adjacent exchanges $(-p,+A,-b,+c)\mapsto(-p,+b,-A,+c)$ with
$p<b<A<c$.  A single such exchange never increases $W$, but after one
exchange the block is no longer a record skeleton and the next inversion may
violate $p<b$ or $A<c$; for the skeleton $+10,-1,+11,-2,+12$ every exchange
sequence leading to the sorted order passes through an uncovered exchange,
and that exchange does increase $W$ (from $25$ to $51$ at $r=3$).  The
direct comparison of Lemma~\ref{lem:normalform} avoids exchanges altogether.}
When $I$ has a least element the
sorted zigzag $0\to+a_{1}\to-a_{2}\to+a_{3}\to\cdots$ is a scalar path
whose completion variation at a radius $r$ with $a_{i-1}<r\leq a_i$ equals
$r+2S_i$: every magnitude through $a_i$ is paid twice and the level $r$ once
on the following leg.  This explains the name of the following quantity, but
we shall only use it as a number.

\begin{lemma}[scalar normal form]\label{lem:normalform}
Let $(a_i)_{i\in I}$ be the sorted record sequence of $x$.  For $r>0$ let
$i=i(r)$ be the index with $a_i=\min\{a_j:a_j\geq r\}$ and $a_{i-1}<r$
(if $i-1\notin I$ the last condition is void).  Then
\begin{equation}\label{eq:scalarwork}
  W(r)\;\geq\;W_*(r):=r+2S_i .
\end{equation}
\end{lemma}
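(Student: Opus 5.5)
We argue directly along the original path: cut $[0,\tau(r)]$ at the turns and apply Lemma~\ref{lem:records}(v) to each cut. By symmetry ($x\mapsto -x$) we may assume that the side completed last is the positive one, i.e.\ $\tau(r)=\sigma^{+}(r)\geq\sigma^{-}(r)$. By Lemma~\ref{lem:records}(vi), $\sigma^{+}(r)$ is a positive record time lying in some phase $j$ with $\varepsilon_j=+$, $m_j\geq r$ and $t_{j-1}<\sigma^{+}(r)\leq t_j$. The target is the intermediate inequality
\[
 W(r)\;\geq\;r+2\sum_{k\in K,\,k<j}m_k ,
\]
followed by the combinatorial claim $\sum_{k<j}m_k\geq S_i$.

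\emph{Step 1 (the variation bound).} Split $[0,\tau(r)]$ at the turns $t_k$, $k<j$. For each pair of consecutive turns $k-1<k<j$, Lemma~\ref{lem:records}(v) gives $\Var(x;[t_{k-1},t_k])\geq m_{k-1}+m_k$. On the final leg $[t_{j-1},\sigma^{+}(r)]$ the path goes from $x(t_{j-1})=-m_{j-1}$ to $r$, so its variation is at least $m_{j-1}+r$. If $K$ has a least element $k_0$, the initial leg from $0$ to $x(t_{k_0})$ contributes at least $m_{k_0}$.

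Adding these contributions, every $m_k$ with $k<j$ is counted twice and $r$ once, which is the displayed bound. If $K$ is unbounded below, we instead start from $t_k$ with $k\to-\infty$. Then $t_k\downarrow t_{\mathrm{dep}}$ and $m_k\to0$ by Lemma~\ref{lem:records}(ii),(vii), so the missing initial term $m_k$ tends to $0$. The bound then follows by monotone convergence of the partial sums, which are finite by (vii).

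\emph{Step 2 (which turns precede phase $j$).} Every turn of magnitude $<r$ has index $<j$.
\begin{itemize}
\item For positive turns this is Lemma~\ref{lem:records}(iv): positive magnitudes increase and $m_j\geq r$.
\item For negative turns, let $j'$ be the phase containing $\sigma^{-}(r)$. By (vi), $\varepsilon_{j'}=-$, $m_{j'}\geq r$, and every negative turn of index $<j'$ has magnitude $<r$. Monotonicity (iv) forces every negative turn of magnitude $<r$ to have index $<j'$.
\item Moreover $j'<j$. Indeed $\sigma^{-}(r)\leq\sigma^{+}(r)$, the two times are distinct, and phases $j'$ and $j$ have opposite signs.
\end{itemize}
Consequently $\{k:k<j\}$ contains all turns of magnitude $<r$ and, in addition, the turn $j'$, whose magnitude $m_{j'}\geq r$.

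\emph{Step 3 (matching with the sorted sequence).} Since $a_{i-1}<r\leq a_i$, the indices $\leq i-1$ of the sorted sequence enumerate exactly the multiset of turn magnitudes $<r$, with multiplicity. The index $i$ corresponds to the smallest magnitude $\geq r$, so $a_i\leq m_{j'}$. Hence
\[
 \sum_{k<j}m_k\;\geq\;\sum_{j\leq i-1}a_j+m_{j'}\;\geq\;S_i ,
\]
and together with Step 1 this gives $W(r)\geq r+2S_i$.

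The main obstacle is the bookkeeping in Steps 1 and 3 in the degenerate cases. When $K$ is unbounded below, we need the limit argument above, and we need that $m_{j-1}$ (or the initial $m_{k_0}$) is genuinely counted twice through the telescoping. When ties $a_i=a_{i+1}$ occur, we check that choosing the index $i$ as in the statement makes the identification of $\{a_j:j\leq i-1\}$ with the magnitudes $<r$ exact.
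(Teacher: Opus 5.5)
Your proposal is correct and follows the paper's proof essentially step by step. It uses the same telescoping of Lemma~\ref{lem:records}(v) up to the phase $j$ containing $\tau(r)$, giving $W(r)\geq r+2\sum_{k<j}m_k$, and the same matching with $(a_i)$ via the earlier phase $j'$ of the opposite first passage. The differences are cosmetic: you place the opposite-sign turns of magnitude $<r$ before $j'$ using monotonicity (iv), where the paper compares first-passage times, and you write out the limiting argument for the case where $K$ is unbounded below.
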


\begin{proof}
Let $\varepsilon$ be the sign whose first passage of $r$ is later, so that
$\tau(r)=\sigma^{\varepsilon}(r)$, and let $j$ be the phase containing
$\sigma^{\varepsilon}(r)$.  By Lemma~\ref{lem:records}(v),(vi), the
variation up to $\tau(r)$ consists of the legs ending at
$t_k$, $k<j$, and of the passage from $\varepsilon_{j-1}m_{j-1}=-\varepsilon
m_{j-1}$ at time $t_{j-1}$ to $\varepsilon r$ at time $\tau(r)$:
\[
 W(r)\;\geq\;\sum_{k<j}(m_{k-1}+m_k)+(m_{j-1}+r)
 \;=\;r+2\sum_{k<j}m_k ,
\]
with the convention $m_{\min K-1}=0$ if $K$ has a least element (if $K$ is
unbounded below the sum is the convergent series of
Lemma~\ref{lem:records}(vii)).  It remains to show
$\sum_{k<j}m_k\geq S_i$, and since
$S_i=\sum_{a_l<r}a_l+a_i$ it suffices to show that every turn of magnitude
$<r$ has index $<j$, and that some turn of index $<j$ has magnitude $\geq r$.

Turns of sign $\varepsilon$ and magnitude $<r$ have index $<j$ by
Lemma~\ref{lem:records}(vi), because turns of sign $\varepsilon$ with index
$\geq j$ have magnitude $\geq m_j\geq r$.  A turn of sign $-\varepsilon$ and
magnitude $m<r$ is the first passage time of the level $-\varepsilon m$, hence
occurs before $\sigma^{-\varepsilon}(r)\leq\tau(r)\leq t_j$; being of sign
$-\varepsilon$ it is not the turn of phase $j$, so its index is $<j$.
Finally $\sigma^{-\varepsilon}(r)<\sigma^{\varepsilon}(r)=\tau(r)$ lies in a
phase $j'$ of sign $-\varepsilon$ with $m_{j'}\geq r$ by
Lemma~\ref{lem:records}(vi), and $j'<j$ because phases are ordered in time.
Then $m_{j'}\geq\min\{a_l:a_l\geq r\}=a_i$, and
$\sum_{k<j}m_k\geq\sum_{a_l<r}a_l+m_{j'}\geq S_i$.
\end{proof}

We now apply Lemma~\ref{lem:normalform} to $x=x_u$, for which
\eqref{eq:completionbound} holds.  For every index $i\in I$
with $a_{i-1}\geq d_0$ define a job
\begin{equation}\label{eq:jobs}
  D_i=Ca_{i-1},
  \qquad J_i=2a_i .
\end{equation}
Here $D_i$ is an ambient deadline and $J_i$ is required projected variation.

\begin{lemma}[workload]\label{lem:workload}
For $t\geq Cd_0$, let $i^*=i^*(t)$ be the least index with
$a_{i^*}>t/C$.  Then
\begin{equation}\label{eq:workload}
 V_u(t)\geq\frac tC+2S_{i^*}
 \geq\frac tC+\sum_{D_i\leq t}J_i ,
\end{equation}
where only the jobs \eqref{eq:jobs} are included.
\end{lemma}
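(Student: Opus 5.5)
The first inequality follows by applying Lemma~\ref{lem:normalform} at the radius $r=t/C\geq d_0$. With this choice the index $i(r)$ from Lemma~\ref{lem:normalform} is the least index with $a_i\geq t/C$. This is not quite $i^*$, which uses a strict inequality, so I first treat the exact case.

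\emph{Case $a_{i(r)}>t/C$.} Then $i(r)=i^*$, and \eqref{eq:completionbound} together with \eqref{eq:scalarwork} gives
\[
V_u(t)=V_u(Cr)\geq W(r)\geq r+2S_{i(r)}=\frac tC+2S_{i^*}.
\]

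\emph{Case $a_{i(r)}=t/C$.} Here I use a limiting argument. For $r'\downarrow r$, the sorted values in $(r,r')$ are finite in number by Lemma~\ref{lem:records}(vii), so for $r'$ close to $r$ there are none. Then $i(r')=i^*$, and
\[
V_u(Cr')\geq r'+2S_{i^*}.
\]
Letting $r'\downarrow r$ and using continuity of $V_u$ gives the claim. (Alternatively, one can argue directly that $\tau$ is at least the first passage time past level $r$, but the limit argument is cleaner.)

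For the second inequality, I compare the job sum with $2S_{i^*}$. A job $i$ has $D_i=Ca_{i-1}\leq t$, that is, $a_{i-1}\leq t/C$. Since $a_{i^*-1}\leq t/C<a_{i^*}$ and the sequence is sorted, this condition means exactly $i-1\leq i^*-1$, i.e.\ $i\leq i^*$. The only subtlety is ties, where $a_{i-1}=a_{i^*}$ could hold for some $i-1\geq i^*$; but that would force $a_{i-1}>t/C$, which excludes such $i$. So the jobs counted satisfy $i\leq i^*$, and also $a_{i-1}\geq d_0$. Hence
\[
\sum_{D_i\leq t}J_i=2\sum_{i\leq i^*,\ a_{i-1}\geq d_0}a_i\leq 2S_{i^*},
\]
because every $a_j$ is nonnegative and the job set is a subset of $\{j\in I : j\leq i^*\}$, with each index counted once. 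This completes \eqref{eq:workload}.

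The main obstacle is the boundary tie $a_{i(r)}=t/C$ in the first step, where Lemma~\ref{lem:normalform}'s index uses $\geq$ but $i^*$ uses $>$. I would handle it by the right-limit in $r$ described above, relying on the local finiteness in Lemma~\ref{lem:records}(vii) and the continuity of $V_u$.
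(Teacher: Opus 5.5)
Your proof is correct and is essentially the paper's argument. The paper also passes to $r'\downarrow r$, using local finiteness to get $i(r')=i^*$ and continuity of $V_u$, though it does this uniformly without your case split, which is harmless. It handles the jobs exactly as you do, via $D_l\le t\iff a_{l-1}\le t/C\iff l-1<i^*$, giving $\sum_{D_l\le t}J_l\le 2S_{i^*}$. The paper additionally notes that $i^*$ exists because the $a_l$ are locally finite and unbounded, a point you take for granted.
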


\begin{proof}
Put $r=t/C\geq d_0$.  The index $i^*$ exists because the $a_l$ are locally
finite and unbounded.  For $r'>r$
close to $r$ we have $\{l:a_l<r'\}=\{l:a_l\leq r\}$ and
$\min\{a_l:a_l\geq r'\}=a_{i^*}$, hence $i(r')=i^*$, and
\eqref{eq:completionbound} with \eqref{eq:scalarwork} gives
$V_u(Cr')\geq r'+2S_{i^*}$.  Letting $r'\downarrow r$ and using the
continuity of $V_u$,
\[
 V_u(t)\geq\frac tC+2S_{i^*}.
\]
A job $l$ satisfies $D_l\leq t$ iff $a_{l-1}\leq r$ iff $l-1<i^*$, so
$\sum_{D_l\leq t}J_l=2\sum_{l\leq i^*,\ a_{l-1}\geq d_0}a_l\leq2S_{i^*}$.
\end{proof}

Two consequences make every constant below independent of $u$.  First, apply
the first inequality in \eqref{eq:workload} at $t=D_i$ for a retained index
$i$ and use
$V_u(D_i)\leq D_i$: since $i^*\geq i$ and $S_i\geq a_{i-1}+a_i$,
\[
 a_{i-1}+2(a_{i-1}+a_i)\leq Ca_{i-1},
\]
so the record factor $q_i=a_i/a_{i-1}$ satisfies
\begin{equation}\label{eq:factorbound}
  1\leq q_i\leq M:=\frac{C-3}{2}.
\end{equation}
In particular $C\geq5$.  Second, if $a_k$ is the first sorted magnitude at
least $d_0$, then the first inequality in \eqref{eq:workload} at $t=Cd_0$
gives
$d_0+2a_k\leq Cd_0$, hence the first retained deadline satisfies
\[
 D_{k+1}=Ca_k\leq T_0:=\frac{C(C-1)}2d_0,
\]
and all subsequent deadline ratios $D_{i+1}/D_i=q_i$ are at most $M$.

\section{Finite-window logarithmic scheduling}

The next lemma is purely one-dimensional.

\begin{lemma}[finite-window scheduling]\label{lem:scheduling}
Let $C>1$, $R=1-1/C$, and $t_0>0$.  Let $(D_i,J_i)_{i\in\mathcal J}$ be a
countable family of jobs with $J_i>0$, $D_i\geq t_0$, and only finitely many
$D_i$ in every bounded interval.  Let $V$ be absolutely continuous on compact
subintervals of $[t_0,\infty)$ and satisfy, for $t\geq t_0$,
\[
 0\leq V'\leq1\ \text{a.e.},\qquad V(t)\leq t,\qquad
 V(t)\geq\frac tC+\sum_{D_i\leq t}J_i .
\]
Suppose that, for some $0<\lambda<1$,
\[
  D_i-\frac{J_i}{R}\geq\lambda D_i
\]
for every job.  Then, for $t_0\leq L<U$,
\begin{equation}\label{eq:windowjobs}
 \int_L^U V'(t)\,\frac{dt}{t}
 \geq \frac1C\log\frac UL+
 \sum_{L/\lambda\leq D_i\leq U}
 R\log\frac{D_i}{D_i-J_i/R}-2R.
\end{equation}
\end{lemma}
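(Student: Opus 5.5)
The plan is to subtract the free drift and treat the rest as a single-machine scheduling problem with rate $R$. Put $E(t)=V(t)-t/C$. The hypotheses become
\[
E'\le R \ \text{a.e.},\qquad E(t)\le Rt,\qquad E(t)\ge\sum_{D_i\le t}J_i .
\]
The first follows from $V'\le1$ and the second from $V(t)\le t$. Since $\int_L^U V'\,dt/t=\frac1C\log\frac UL+\int_L^U E'\,dt/t$, it suffices to show
\[
\int_L^U E'(t)\,\frac{dt}t\ \ge\ \sum_{L/\lambda\le D_i\le U}R\log\frac{D_i}{D_i-J_i/R}-2R .
\]
The only obstruction to $E'$ being nonnegative is that $V'\ge0$ gives merely $E'\ge-1/C$. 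I would handle this by replacing $E$ on $[L,U]$ with its running maximum $\bar E$. This is still $R$-Lipschitz, has $\bar E'\in[0,R]$, still dominates the cumulative demand, and satisfies $\int \bar E'\,dt/t\le\int E'\,dt/t+(\text{a nonnegative correction})$. I expect to need to check the sign of this correction carefully via integration by parts, $\int_L^U E'/t=E(U)/U-E(L)/L+\int_L^U E/t^2$, which is monotone in $E$ pointwise except for the boundary term $-E(L)/L\ge -R$.

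The core step is an exchange (latest-deadline) argument. Among all $R$-Lipschitz nondecreasing $F$ on $[L,U]$ with $F(L)=E(L)$ and $F(t)\ge\sum_{D_i\le t}J_i$, the weight $1/t$ is decreasing. So $\int F'\,dt/t$ is minimized by doing the work as late as possible: $F'\in\{0,R\}$, with busy blocks ending exactly at deadlines. Within one busy block $[a,b]$, the jobs are stacked backwards from their deadlines, so each job $i$ occupies an interval $[s_i,s_i+J_i/R]$ with $s_i+J_i/R\le D_i$. Since $s\mapsto R\log\frac{s+J/R}{s}$ is decreasing in $s$, that interval contributes at least $R\log\frac{D_i}{D_i-J_i/R}$, provided the interval lies in $[L,U]$. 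The hypothesis $D_i-J_i/R\ge\lambda D_i\ge L$ for $D_i\ge L/\lambda$ guarantees that a job's \emph{own} unstacked interval lies above $L$.

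Two boundary losses remain, and I would bound each by $R$. First, the carried-in credit: $E(L)-\sum_{D_i\le L}J_i\le E(L)\le RL$. This lets the earliest window jobs be served by work already done before $L$. In the latest schedule this credit removes at most measure $L$ of busy time, taken from the earliest busy times $\ge L$. The lost integral is therefore at most $\int_L^{2L}R\,dt/t=R\log2\le R$. Second, stacking can push a block's start $a$ below $L$, so part of some job intervals falls outside the window. Here I would use $E(t)\le Rt$ at the block's end together with the demand constraint to show that the portion of the block below $L$ carries logarithmic weight at most $R$, i.e.\ $R\log(L/a)\le R$. Jobs with $D_i<L/\lambda$ are simply dropped, since all terms are nonnegative.

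I expect this second boundary estimate, controlling how far a stacked block can spill below $L$, to be the main obstacle; the exchange argument itself is routine. If the direct bound fails, I would instead charge the spilled part against the credit term, since work forced before $L$ by window jobs is exactly work that $E(L)\le RL$ must already contain, and combine both losses into the single $-2R$.
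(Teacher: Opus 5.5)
Your proposal has the same overall shape as the paper's proof (envelope, latest-schedule bound per job, boundary charge), but two steps fail as written.

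\textbf{The envelope goes the wrong way.} The running maximum $\bar E(t)=\max_{L\le s\le t}E(s)$ is a \emph{majorant} of $E$ with $\bar E(L)=E(L)$. The integration by parts you quote then gives
\[
 \int_L^U\bar E'\,\frac{dt}{t}-\int_L^U E'\,\frac{dt}{t}
 =\frac{\bar E(U)-E(U)}{U}+\int_L^U\frac{\bar E(t)-E(t)}{t^2}\,dt\;\geq\;0 .
\]
So any lower bound you prove for $\int\bar E'\,dt/t$ bounds a \emph{larger} quantity than the one in the lemma. The difference is also not $O(R)$: if $E$ oscillates on geometric scales between $0$ and a fixed fraction of $Rt$ (allowed where the demand is small), it is of order $R\log(U/L)$.

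Since $\int_L^U E'\,dt/t$ is increasing in $E$ on $(L,U]$, you need a nondecreasing $R$-Lipschitz \emph{minorant} that agrees with $E$ at $U$. The paper uses the future running minimum $\bar E(t)=\min_{t\le s\le U}E(s)$. It still dominates the demand because $s\mapsto\sum_{D_i\le s}J_i$ is nondecreasing, and it gives $\int_L^U E'\,dt/t\ge\int_L^U\bar E'\,dt/t-(E(L)-\bar E(L))/L$. So the envelope step is not free; it costs up to $(E(L)-\bar E(L))/L$.

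\textbf{The direct spill estimate $R\log(L/a)\le R$ is false.} The constraint $E(t)\le Rt$ only gives $a\ge0$. For example, let many small jobs have deadlines filling $[L/\lambda,2L/\lambda]$ with cumulative demand $2R(t-L/\lambda)$, and let $V(t)=t$ up to $2L/\lambda$. Every hypothesis holds, yet the latest schedule is busy on all of $[t_0,2L/\lambda]$. Hence $R\log(L/a)$ can be as large as $R\log(L/t_0)$.

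The loss has to be charged per job, which is your fallback and the paper's Steps 2--3:
\begin{itemize}
\item Treat $\bar E(L)$ as an atom at $L$.
\item Allocate the mass of $\bar E$ to jobs in earliest-deadline order by level bands. Job $i$ then takes $b_i$ from the atom and $J_i-b_i$ from $\bar E'$ inside $(L,D_i]$, with $\sum b_i\le\bar E(L)$.
\item Using $D_i-J_i/R\ge\lambda D_i\ge L$, the latest-schedule comparison gives at least $R\log\frac{D_i}{D_i-J_i/R}-R\log\bigl(1+\frac{b_i}{R(D_i-J_i/R)}\bigr)\ge R\log\frac{D_i}{D_i-J_i/R}-\frac{b_i}{L}$.
\end{itemize}
In this formulation your ``credit'' and ``spill'' are the same loss, and it is at most $\bar E(L)/L$. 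Adding the corrected envelope loss gives a total of at most $E(L)/L\le R$, which is within the $2R$ allowed. Once the envelope is corrected, you could not afford separate charges for the envelope, the credit and the spill, but you do not need them.

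The rest of your plan is sound: the latest-schedule exchange and the per-job bound $R\log\frac{D_i}{D_i-J_i/R}$ via $D_i-J_i/R\ge\lambda D_i\ge L$.
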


\begin{proof}
\emph{Step 1: a monotone envelope.}
Set $z(t)=V(t)-t/C$, so that $-1/C\leq z'\leq R$ a.e., $z\geq0$ on
$[t_0,\infty)$, and $z(t)\geq\sum_{D_i\leq t}J_i$.  On $[L,U]$ take the
future running minimum
\[
 \bar z(t)=\min_{t\leq s\leq U}z(s),
 \qquad
 \bar V(t)=\frac tC+\bar z(t).
\]
Clearly $0\leq\bar z\leq z$, $\bar z$ is nondecreasing, and
$\bar z(U)=z(U)$.  It is $R$-Lipschitz: for $t<t'$, if the minimum defining
$\bar z(t)$ is attained at some $s\in[t,t']$ then
$\bar z(t')\leq z(t')\leq z(s)+R(t'-s)\leq\bar z(t)+R(t'-t)$, and otherwise
$\bar z(t')=\bar z(t)$.  For a deadline $D_i\in[L,U]$ and every
$s\in[D_i,U]$ we have $z(s)\geq\sum_{D_j\leq s}J_j\geq\sum_{D_j\leq D_i}J_j$,
so
\begin{equation}\label{eq:envelopejobs}
 \bar z(D_i)\geq\sum_{D_j\leq D_i}J_j\qquad(D_i\in[L,U]),
\end{equation}
and likewise $\bar z(L)\geq\sum_{D_j\leq L}J_j$.  Also
$\bar z(L)\leq z(L)\leq L-L/C=RL$.  Since $z-\bar z$ is absolutely
continuous, nonnegative and vanishes at $U$, integration by parts gives
\[
 \int_L^U(V'-\bar V')\,\frac{dt}{t}
 =\int_L^U(z'-\bar z')\,\frac{dt}{t}
 =-\frac{z(L)-\bar z(L)}L
   +\int_L^U\frac{z(t)-\bar z(t)}{t^2}\,dt
 \geq-\frac{z(L)}{L}\geq-R,
\]
and therefore
\begin{equation}\label{eq:step1}
 \int_L^U V'\,\frac{dt}{t}
 \geq\frac1C\log\frac UL+\int_L^U\bar z'(t)\,\frac{dt}{t}-R.
\end{equation}

\emph{Step 2: earliest-deadline allocation.}
Let $\mu$ be the measure on $[L,U]$ with an atom of mass $B=\bar z(L)$ at
$L$ and density $\bar z'$ on $(L,U]$; thus $\mu([L,t])=\bar z(t)$.  List
the finitely many jobs with $D_i\leq U$ in nondecreasing order of deadline
and let $A_n$ be the cumulative volume of the first $n$ of them.  Assign to
the $n$-th job the mass of $\mu$ in the level band $(A_{n-1},A_n]$, that is,
the part of the atom of size
$b_n=|(A_{n-1},A_n]\cap[0,B]|$ together with $\bar z'\,dt$ restricted to
$E_n=\{t\in(L,U]:A_{n-1}<\bar z(t)\leq A_n\}$.  Since $\bar z$ is
nondecreasing and absolutely continuous, for
$B\leq a<b\leq\bar z(U)$ the truncation $\min\{\max\{\bar z,a\},b\}$ is
absolutely continuous with derivative $\bar z'\mathbf 1_{\{a<\bar z<b\}}$
almost everywhere (a monotone absolutely continuous function has zero
derivative a.e.\ on each of its level sets), and integrating it over $[L,U]$
gives
\[
 \int_{\{a<\bar z(t)\leq b\}}\bar z'(t)\,dt=b-a.
\]
Thus the density part of $\mu$ pushed forward by $\bar z$ is Lebesgue
measure on $(B,\bar z(U)]$, and
$\int_{E_n}\bar z'\,dt=J_{i_n}-b_n$.  The sets $E_n$ are pairwise disjoint
and $\sum_nb_n\leq B$.  Jobs with deadline at most $L$ have cumulative
volume at most $B$ by \eqref{eq:envelopejobs}, so their level bands lie
entirely in the initial atom and their density allocation is zero.  For a
job with $D_{i_n}\in(L,U]$, \eqref{eq:envelopejobs} gives
$\bar z(D_{i_n})\geq A_n$.  Monotonicity then implies that
$E_n\cap(D_{i_n},U]$ is contained in the level set
$\{\bar z=A_n\}$, on which $\bar z'=0$ almost everywhere.  Hence the
density mass of every job sits in $(L,D_{i_n}]$.  In words: every job is
served before its deadline by the earliest available capacity, using $b_n$
units of the initial inventory.  In the next step we write $E_i$ and $b_i$
for the set and atom share belonging to the job whose original index is
$i=i_n$.

\emph{Step 3: cost of the selected jobs.}
Let $S=\{i:L/\lambda\leq D_i\leq U\}$.  Because the sets $E_n$ are disjoint
and $\bar z'\geq0$,
\[
 \int_L^U\bar z'\,\frac{dt}{t}\geq\sum_{i\in S}\int_{E_i}\bar z'\,\frac{dt}{t}.
\]

For $i\in S$ the density $\bar z'$ is at most $R$, its integral over
$E_i$ equals its integral over $E_i\cap(L,D_i]$ and is $J_i-b_i$.  Hence,
with $A=J_i-b_i$,
\[
 A\leq R(D_i-L),\qquad
 s:=D_i-\frac AR\geq L,
\]
and also $s\geq D_i-J_i/R\geq\lambda D_i>0$.  Since $1/t$ is decreasing,
the integral is at least that of the latest possible schedule: rate $R$ on
the interval $[s,D_i]\subseteq[L,D_i]$.  Indeed, subtracting the rate-$R$
indicator of $[s,D_i]$ from $\bar z'\mathbf 1_{E_i}$ gives a function of
integral zero which is nonnegative before $s$ and nonpositive after $s$;
multiplication by the decreasing function $1/t$ can only make its integral
nonnegative.  Therefore
\[
 \int_{E_i}\bar z'\,\frac{dt}{t}
 \geq R\log\frac{D_i}{D_i-(J_i-b_i)/R}
 = R\log\frac{D_i}{D_i-J_i/R}
   -R\log\Bigl(1+\frac{b_i}{R(D_i-J_i/R)}\Bigr).
\]
By $\log(1+x)\leq x$ and $D_i-J_i/R\geq\lambda D_i\geq L$ for $i\in S$,
the prepayment loss is at most $b_i/L$, and summing over $S$ loses at most
$B/L\leq R$.  Combining with \eqref{eq:step1} proves \eqref{eq:windowjobs}.
\end{proof}

Apply the lemma to \eqref{eq:workload} with $t_0=Cd_0$: the retained jobs
\eqref{eq:jobs} have $D_i\geq Cd_0$, and finitely many deadlines lie in any
bounded interval by Lemma~\ref{lem:records}(vii).  Take
\[
 R=\frac{C-1}{C},
 \qquad
 \lambda=\frac2{C-1}\in(0,1).
\]
Indeed,~\eqref{eq:factorbound} gives
\[
 D_i-\frac{J_i}{R}
 =D_i\left(1-\frac{2q_i}{C-1}\right)
 \geq\lambda D_i.
\]
Define
\begin{align}
 h_C(q)&=-\log\left(1-\frac{2q}{C-1}\right),\label{eq:hdef}\\
 m(C)&=\inf_{1<q<(C-1)/2}\frac{h_C(q)}{\log q},\label{eq:mdef}\\
 G(C)&=\frac1C+\left(1-\frac1C\right)m(C).\label{eq:Gdef}
\end{align}
Then $m(C)\geq0$, and $h_C(q_i)\geq m(C)\log q_i$ for every retained job:
for $q_i>1$ by definition, and for $q_i=1$ because $h_C(1)>0$.
Figure~\ref{fig:schedule} shows the schedule for which
Lemma~\ref{lem:scheduling} is tight when all record factors equal the
minimizer in \eqref{eq:mdef}.

\begin{figure}[t]
\centering
\begin{tikzpicture}
\begin{axis}[width=0.9\textwidth,height=4.6cm,xmode=log,xmin=0.5,xmax=12,
  ymin=0,ymax=1.18,ytick={0.0794,1},yticklabels={$1/C$,$1$},
  xtick={0.6229,1,1.3616,2.1857,2.9760,4.7774,6.5049,10.4420},
  xticklabels={$\theta D_1$,$D_1$,$\theta D_2$,$D_2$,$\theta D_3$,$D_3$,$\theta D_4$,$D_4$},
  xticklabel style={font=\scriptsize},ylabel={$V'(t)$},xlabel={$t$ (logarithmic scale)},
  xlabel style={font=\small},ylabel style={font=\small},grid=none]
\addplot[const plot,very thick,blue!70!black] coordinates {
 (0.5,0.0794) (0.6229,1) (1,0.0794) (1.3616,1) (2.1857,0.0794) (2.9760,1) (4.7774,0.0794) (6.5049,1) (10.4420,0.0794) (12,0.0794)};
\foreach \d in {1,2.1857,4.7774,10.4420} { \addplot[dashed,gray] coordinates {(\d,0) (\d,1.18)}; }
\node[font=\scriptsize,anchor=south] at (axis cs:0.789,1.02) {$J_1/R$};
\node[font=\scriptsize,anchor=south] at (axis cs:1.725,1.02) {$J_2/R$};
\node[font=\scriptsize,anchor=south] at (axis cs:3.77,1.02) {$J_3/R$};
\node[font=\scriptsize,anchor=south] at (axis cs:8.24,1.02) {$J_4/R$};
\end{axis}
\end{tikzpicture}
\caption{The extremal per-direction schedule at $C=C_{\log}$ and record
factor $q=q_{\log}$: geometric deadlines $D_{i+1}=qD_i$, full speed on the
latest interval $[\theta D_i,D_i]$ of length $J_i/R$ before each deadline,
where $\theta=1-2q/(C-1)=0.623\ldots$, and the baseline rate $1/C$
otherwise.  The full-speed intervals occupy the fraction
$h_C(q)/\log q=m(C)=0.605\ldots$ of logarithmic time, so the log-time
average of $V'$ is $1/C+(1-1/C)m(C)=G(C)=2/\pi$: this is the profile that
Lemma~\ref{lem:scheduling} shows to be cheapest, and that the angular budget
of Section~\ref{sec:angular} forbids to be beaten on average over
directions.}
\label{fig:schedule}
\end{figure}

Choose $L$ so large that $L/\lambda>T_0$.  Let $D_m$ and $D_n$ be the first
and last deadlines in $[L/\lambda,U]$ (if there is none the estimate below
is trivial, see the end of this section).  There is a preceding retained
deadline $D_{m-1}<L/\lambda$, and~\eqref{eq:factorbound} gives
\[
 D_m\leq\frac{M}{\lambda}L,
 \qquad D_{n+1}>U.
\]
Since $D_{i+1}/D_i=q_i$,
\begin{equation}\label{eq:logspan}
 \sum_{L/\lambda\leq D_i\leq U}\log q_i
 =\log\frac{D_{n+1}}{D_m}
 \geq\log\frac UL-\log\frac M\lambda.
\end{equation}
Combining Lemma~\ref{lem:scheduling}, the job penalty
$R h_C(q_i)\geq Rm(C)\log q_i$, and~\eqref{eq:logspan} proves the uniform
directional estimate
\begin{equation}\label{eq:uniformactivity}
 \int_L^U|\langle u,\gamma'(t)\rangle|\,\frac{dt}{t}
 \geq G(C)\log\frac UL-B_C,
\end{equation}
where
\begin{equation}\label{eq:Bdef}
 B_C=2R+Rm(C)\log\frac M\lambda
 =2R+Rm(C)\log\left(\frac{C-1}{2}\cdot\frac{C-3}{2}\right).
\end{equation}
If no deadline lies in $[L/\lambda,U]$ then $U<D_m\leq ML/\lambda$, so the
right-hand side of \eqref{eq:uniformactivity} is at most
$\frac1C\log\frac UL-2R$, which is a lower bound for the left-hand side by
\eqref{eq:windowjobs}.  Crucially, $B_C$ and the admissible lower threshold
for $L$ depend only on $C$ and $d_0$, not on $u$.

\section{The angular budget and the lower bound}\label{sec:angular}

We now add the directional inequalities without losing a geometric constant.

\begin{proof}[Proof of Theorem~\ref{thm:main}]
Let $\gamma$ be $C$-competitive; by \eqref{eq:factorbound}, $C\geq5$, so
$\lambda<1$ and the domain of the infimum \eqref{eq:mdef} is nonempty.  For
$u_\theta=(\cos\theta,\sin\theta)$, Tonelli's theorem and unit-speed
parametrization give
\begin{align*}
 \int_0^\pi\int_L^U
 |\langle u_\theta,\gamma'(t)\rangle|\,\frac{dt}{t}\,d\theta
 &=\int_L^U\left(\int_0^\pi
 |\langle u_\theta,\gamma'(t)\rangle|\,d\theta\right)\frac{dt}{t}\\
 &=2\log\frac UL.
\end{align*}
Integrating~\eqref{eq:uniformactivity} over $\theta$ therefore yields
\[
 2\log\frac UL
 \geq\pi G(C)\log\frac UL-\pi B_C.
\]
Keep $L$ fixed above its uniform threshold and let $U/L\to\infty$.  We obtain
\begin{equation}\label{eq:angularconstraint}
  G(C)\leq\frac2\pi.
\end{equation}

It remains to identify the threshold.  Put $k=2/(C-1)$, $x=\log q$, and
\[
 \mathcal L_C(x)=-\log(1-ke^x),\qquad 0<x<\log(1/k).
\]
The derivative numerator of $\mathcal L_C(x)/x$ is
\[
 \mathcal H_C(x)=x\mathcal L_C'(x)-\mathcal L_C(x),
 \qquad
 \mathcal H_C'(x)=x\mathcal L_C''(x)
 =\frac{xke^x}{(1-ke^x)^2}>0.
\]
It tends to $\log(1-k)<0$ as $x\downarrow0$ and to $+\infty$ at the other
end of the domain.  Thus the infimum in~\eqref{eq:mdef} is attained at one
unique record factor $q_C=e^{x_C}$.  Since
$\partial_x\mathcal H_C(x_C)=\mathcal H_C'(x_C)>0$, the implicit-function
theorem makes $x_C$, and hence $q_C$ and $G(C)$, continuous (indeed smooth)
in $C$ on the range under consideration.

For fixed admissible $q$, the expression
\[
 f_q(C)=\frac1C+\left(1-\frac1C\right)\frac{h_C(q)}{\log q}
\]
is strictly decreasing in $C$: writing $\ell=\log q$ and $z=2q/(C-1)$,
\[
 C^2\ell\,f_q'(C)=h_C(q)-\ell-\frac{2qC}{C-1-2q}<0
\]
because $h_C(q)=-\log(1-z)<z/(1-z)=2q/(C-1-2q)$.  If $C_1<C_2$ and $q_1$
is the minimizer at $C_1$, which is admissible at $C_2$, then
$G(C_1)=f_{q_1}(C_1)>f_{q_1}(C_2)\geq G(C_2)$; so $G$ is strictly
decreasing.  Consequently there is at most one solution of
\[
  G(C_{\log})=\frac2\pi.
\]
Continuity together with the opposite certified signs in
Section~\ref{sec:numerics} proves existence and, in particular,
$C_{\log}>12.5937$.
If $C<C_{\log}$, strict monotonicity gives $G(C)>2/\pi$, contradicting
\eqref{eq:angularconstraint}.  Hence every competitive constant is at least
$C_{\log}$.
\end{proof}

\section{Three consistency checks}\label{sec:checks}

The argument uses the plane only through the disk deadlines and the angular
budget $\int_0^\pi|\langle u_\theta,v\rangle|\,d\theta=2\|v\|$.  Replacing
the budget by the one appropriate to a restricted family of target lines
gives lower bounds for the corresponding restricted problems, which can be
compared with known values.  The script \path{scripts/sanity_checks.py}
reproduces the numbers below.  Figure~\ref{fig:G} summarizes them.

\begin{figure}[t]
\centering
\begin{tikzpicture}
\begin{axis}[width=0.85\textwidth,height=6.6cm,xmin=6.5,xmax=16,ymin=0.4,ymax=1.65,
  xlabel={$C$},ylabel={$G(C)$},xlabel style={font=\small},ylabel style={font=\small},
  xtick={7,8,9,10,11,12,13,14,15,16},
  ytick={0.5,1,1.5},
  legend style={font=\scriptsize,at={(0.97,0.95)},anchor=north east,draw=none}]
\addplot[very thick,blue!70!black,smooth] coordinates {
 (6.550,1.6599) (6.700,1.5945) (6.850,1.5342) (7.000,1.4784) (7.150,1.4267) (7.400,1.3482) (7.650,1.2782) (7.900,1.2153) (8.150,1.1584) (8.400,1.1067) (8.650,1.0595) (8.900,1.0163) (9.150,0.9765) (9.400,0.9398) (9.650,0.9058) (9.900,0.8742) (10.150,0.8448) (10.400,0.8174) (10.650,0.7917) (10.900,0.7675) (11.150,0.7449) (11.400,0.7235) (11.650,0.7034) (11.900,0.6844) (12.150,0.6663) (12.400,0.6493) (12.650,0.6330) (12.900,0.6176) (13.150,0.6029) (13.400,0.5890) (13.650,0.5756) (13.900,0.5629) (14.150,0.5507) (14.400,0.5390) (14.650,0.5278) (14.900,0.5171) (15.150,0.5068) (15.400,0.4969) (15.650,0.4874) (15.900,0.4783)};
\addlegendentry{$G(C)$}
\addplot[dashed,red!70!black,thick] coordinates {(6.5,1) (9,1) (9,0.4)};
\addlegendentry{budget $1$ (one dimension): $C\geq9$}
\addplot[dashed,orange!80!black,thick] coordinates {(6.5,0.70711) (11.603,0.70711) (11.603,0.4)};
\addlegendentry{budget $1/\sqrt2$ (axis-parallel lines): $C\geq11.60$}
\addplot[dashed,green!50!black,thick] coordinates {(6.5,0.63662) (12.594,0.63662) (12.594,0.4)};
\addlegendentry{budget $2/\pi$ (all lines): $C\geq C_{\log}=12.594$}
\addplot[only marks,mark=*,mark size=2.2pt,black] coordinates {(13.811,0.5673)};
\addlegendentry{logarithmic spiral ($C=13.81$)}
\node[font=\scriptsize,anchor=south west] at (axis cs:6.6,1.0) {$1$};
\node[font=\scriptsize,anchor=south west] at (axis cs:6.6,0.7071) {$1/\sqrt2$};
\node[font=\scriptsize,anchor=north west] at (axis cs:6.6,0.6366) {$2/\pi$};
\node[font=\scriptsize,anchor=south west,red!70!black] at (axis cs:9.05,0.42) {$9$};
\node[font=\scriptsize,anchor=south east,orange!80!black] at (axis cs:11.57,0.42) {$11.60$};
\node[font=\scriptsize,anchor=south west,green!50!black] at (axis cs:12.63,0.42) {$12.594$};
\node[font=\scriptsize,anchor=south west] at (axis cs:13.9,0.575) {$13.81$};
\end{axis}
\end{tikzpicture}
\caption{The strictly decreasing function $G$ of \eqref{eq:Gdef}.  A
family of target lines imposes a velocity budget $\beta$ (the maximal
log-time average of the projected speed summed over the relevant
directions), and the argument gives $G(C)\leq\beta$; the three budgets
$1$, $1/\sqrt2$, $2/\pi$ yield the bounds $9$, $11.60$, $12.594$.  The
logarithmic spiral lies to the right of all of them.}
\label{fig:G}
\end{figure}

\begin{corollary}[one dimension]\label{cor:cowpath}
For a scalar search ($u$ fixed, $V=$ variation of the path itself, so
$V'\leq1$), \eqref{eq:uniformactivity} and $V'\leq1$ give $G(C)\leq1$, and
$G(C)\leq1$ holds if and only if $C\geq9$.
\end{corollary}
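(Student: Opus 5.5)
The plan has two independent parts: first derive $G(C)\leq1$ from the directional estimate, then evaluate $G$ at $C=9$ and use the monotonicity of $G$ established in the proof of Theorem~\ref{thm:main}.

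\emph{Step 1: $G(C)\leq1$.} In the scalar setting take $V$ to be the variation of the path itself. Then $V'\leq1$ almost everywhere, so for every $L<U$
\[
 \int_L^U V'(t)\,\frac{dt}{t}\leq\log\frac UL .
\]
The derivation of \eqref{eq:uniformactivity} used only the hypotheses of Lemma~\ref{lem:scheduling} together with the workload bound of Lemma~\ref{lem:workload}. Both hold verbatim for a scalar path that reaches $\pm r$ by time $Cr$, which is exactly the cow-path requirement. Hence
\[
 \log\frac UL\geq G(C)\log\frac UL-B_C .
\]
Fix $L$ above the threshold determined by $C$ and $d_0$, divide by $\log(U/L)$, and let $U\to\infty$. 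This gives $G(C)\leq1$. As in the main proof, \eqref{eq:factorbound} yields $C\geq5$, so $G$ is well defined here.

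\emph{Step 2: compute $G(9)$.} At $C=9$ we have $k=2/(C-1)=1/4$, so
\[
 h_9(q)=-\log\Bigl(1-\frac q4\Bigr),\qquad 1<q<4 .
\]
Consider the candidate minimizer $q=2$, i.e.\ $x=\log2$. There $ke^x=1/2$, so $\mathcal L_9(x)=\log2$ and $\mathcal L_9'(x)=ke^x/(1-ke^x)=1$. Therefore
\[
 \mathcal H_9(\log2)=\log2\cdot1-\log2=0 .
\]
The proof of Theorem~\ref{thm:main} showed that $\mathcal H_C$ is strictly increasing and that its unique zero is the unique minimizer of $\mathcal L_C(x)/x$. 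Hence $q_9=2$, which gives
\[
 m(9)=\frac{\log2}{\log2}=1,\qquad G(9)=\frac19+\frac89\cdot1=1 .
\]

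\emph{Step 3: conclude.} The proof of Theorem~\ref{thm:main} also showed that $G$ is strictly decreasing on $C\geq5$. Combined with $G(9)=1$, this gives $G(C)\leq1$ if and only if $C\geq9$, which proves the equivalence.

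The only delicate point is Step 1: one must confirm that nothing in \eqref{eq:uniformactivity} used the planar structure. Checking this, the disk deadline enters only through its scalar consequence \eqref{eq:completionbound}, which the cow-path hypothesis supplies directly, so Step 1 goes through. The rest is the explicit evaluation at $q=2$.
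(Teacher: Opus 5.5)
Your proof is correct, but you reach the equivalence by a different route from the paper.

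\textbf{Your route.} You check that $q=2$ is the zero of $\mathcal H_9$. From this you get $m(9)=1$ and $G(9)=1$. You then invoke the strict monotonicity of $G$ proved for Theorem~\ref{thm:main}.

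\textbf{The paper's route.} It uses neither the stationarity analysis of $\mathcal H_C$ nor the monotonicity of $G$. Since $G(C)=\frac1C+\bigl(1-\frac1C\bigr)m(C)$ and the infimum defining $m(C)$ is attained, $G(C)\le1$ holds iff $h_C(q)\le\log q$ for some admissible $q$. Equivalently, $q\bigl(1-\frac{2q}{C-1}\bigr)\ge1$ for some $1<q<\frac{C-1}{2}$. This quadratic peaks at $q=\frac{C-1}{4}$ with value $\frac{C-1}{8}$; that point is admissible for $C>5$, and at $C=5$ the supremum is $\tfrac12$. So the condition is exactly $C\ge9$.

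\textbf{Comparison.} The paper's argument is self-contained and elementary. It derives the threshold $9$ and the doubling factor $q=2$, whereas yours verifies values that must be known in advance. In exchange, yours is shorter once the facts from the main proof (unique zero of $\mathcal H_C$ as the minimizer, strict decrease of $G$) are granted, and both hold on the whole range $C\ge5$.

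Your Step~1 spells out the first clause of the statement, which the paper leaves implicit, and it is sound. The derivation of \eqref{eq:uniformactivity} uses the path only through \eqref{eq:completionbound}, $V(t)\le t$ and $0\le V'\le1$, all of which the scalar cow-path hypothesis supplies.
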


\begin{proof}
Recall $C\geq5$ by \eqref{eq:factorbound}, and that the infimum defining
$m(C)$ is attained.  Then $G(C)\leq1$ iff $m(C)\leq1$ iff
$h_C(q)\leq\log q$ for some admissible $q$ iff
$\sup_{1<q<(C-1)/2} q\bigl(1-\frac{2q}{C-1}\bigr)\geq1$.  For $C>5$ the
supremum is attained at the admissible point $q=(C-1)/4$ and equals
$(C-1)/8$, while for $C=5$ it is $\tfrac12$; so the condition is $C\geq9$.
At $C=9$
the extremal factor is $q=2$, and indeed $h_9(q)-\log q=-\log(q-q^2/4)\geq0$
with equality only at $q=2$.
\end{proof}

Thus the scheduling bound reproduces the Beck--Newman constant
\eqref{eq:cowpath} exactly, together with the doubling factor of the optimal
cow-path strategy.  In this sense Lemma~\ref{lem:scheduling} is tight in one
dimension.

\begin{remark}[axis-parallel shoreline]
If only lines parallel to the coordinate axes are to be found, the two
coordinate projections satisfy the same deadlines, and
$|x'|+|y'|\leq\sqrt2$ replaces the angular identity.  The argument then
gives $2G(C)\leq\sqrt2$, i.e.\ $C\geq11.6027\ldots$.  This is below
$12.5385\ldots$, the value of Langetepe's strategy for that problem
\cite{langetepe2012} (which improved the earlier $12.5406$ of
\cite{jez2009}), as it must be; the difference measures what the two-axis
budget loses compared with the exact angular average.
\end{remark}

\begin{remark}[the logarithmic spiral]
For the spiral $\gamma(\varphi)=e^{b\varphi}(\cos\varphi,\sin\varphi)$ with
the optimal $b=0.21247\ldots$ one recomputes $C=13.8111\ldots$; each
projection has geometric record factor $q=e^{b\pi}=1.9493\ldots$, so the
sorted record sequence coincides with the record sequence.  Numerically,
the workload inequality \eqref{eq:workload} holds for all $t$, with equality
at every deadline $t=D_i$ (slack $<10^{-12}$ there, and $0.127\,t$ midway
between deadlines): the spiral's worst-case lines are exactly the ones that
define the deadlines, so it is extremal for the workload constraint.  The
window inequality \eqref{eq:windowjobs} holds on the windows tested, and
the asymptotic per-direction activity satisfies
$2/\pi=0.6366\ldots\geq 1/C+Rh_C(q)/\log q=0.5767\ldots\geq
G(C)=0.5673\ldots$.  The extremal record factor of the theorem,
$q_{\log}=2.1857\ldots$, differs from the spiral's $1.9493\ldots$; the gap
between $12.5937$ and $13.8111$ reflects that the extremal per-direction
schedules of Lemma~\ref{lem:scheduling} cannot be realized simultaneously in
all directions by one planar path.
\end{remark}

\section{Certified numerical enclosure}\label{sec:numerics}

Only the final one-dimensional constant is numerical.  The script
\path{scripts/certify_constant.py} uses Arb balls at $256$-bit precision.
For each of the rational decimal endpoints
\[
 C_-=12.59370967012466,
 \qquad
 C_+=12.59370967012468,
\]
it brackets the unique zero of the strictly increasing stationarity function
\[
 H(C,q)=\frac{2q\log q}{C-1-2q}
        +\log\left(1-\frac{2q}{C-1}\right)
\]
(this is $\mathcal H_C(\log q)$ in the notation of the previous section).
Evaluating the activity on the entire minimizing-factor bracket gives
\begin{align*}
 G(C_-)-\frac2\pi
 &\in 4.7852509208884\times10^{-16}+[-9,9]\times10^{-48},\\
 G(C_+)-\frac2\pi
 &\in-8.0053211211654\times10^{-16}+[-2,2]\times10^{-47}.
\end{align*}
Both signs are therefore rigorous.  The associated record factor is
\[
 q_{\log}=2.185730264988\ldots.
\]

The repository also contains small Lean/mathlib files checking the exact
crossing lemma, the one-Lipschitz projection reduction, the finite
sum-splitting inequality at the heart of Lemma~\ref{lem:normalform}, the
deadline factor bound, and the logarithmic prepayment inequality.  These
checks support the algebra used above; the analytic compactness and
integration arguments remain the paper proof rather than a claim of complete
formalization.

\section*{Disclosure of AI assistance}

Large language models were used in the preparation of this work: OpenAI
GPT~5.6 Sol and Anthropic Claude Fable~5 assisted with numerical
experiments, exploration of proof strategies, and editing of the text.  All
definitions, statements, and proofs were checked by the author, who takes
full responsibility for the content.

\appendix

\section{Proof of Lemma~\ref{lem:records}}\label{app:turns}

Throughout, $x$ satisfies the standing assumptions of
Section~\ref{sec:normalform}.  We use one observation repeatedly:

\smallskip\noindent\emph{Limit claim.}  If $a_n\uparrow a$ are record times
of one sign, say $a_n\in P$, and $a>t_{\mathrm{dep}}$, then $a\in P$.
Indeed $x(a)=M(a)$ by continuity, and if $M(s)=M(a)$ for some $s<a$, then
for large $n$ we would have $s<a_n$ and $M(a_n)=M(a)=M(s)$, contradicting
that $a_n$ is a strict record.

\smallskip\noindent(i)
Fix $t>t_{\mathrm{dep}}$.  Since $x$ is not identically zero on $[0,t]$,
$M(t)>0$ or $N(t)>0$, and the first passage time of the level $M(t)$ or
$-N(t)$ is a record time in $(t_{\mathrm{dep}},t]$.  Let $a$ be the
supremum of the record times in $(t_{\mathrm{dep}},t]$; we show that it is
attained.  If not, choose record times $a_n\uparrow a$ with $a_n<a$.  Were
both signs to occur infinitely often, continuity along the two subsequences
would give $x(a)=M(a)=-N(a)$, hence $M(a)=N(a)=0$ and $a\leq
t_{\mathrm{dep}}$, which is absurd; so one sign occurs infinitely often, and
the limit claim shows that $a$ is a record time.  Thus $\rho(t)=a$ exists.
Next, $\rho(t)$ has the same sign as $t$ because $\rho(\rho(t))=\rho(t)$,
and no record time lies in $(\rho(t),t]$, so $\rho$ and hence the sign are
constant on $[\rho(t),t]$; by maximality this interval lies in the phase of
$t$, which therefore contains the record time $\rho(t)$.  The existence of a
last record time in each phase is proved after (ii).

\smallskip\noindent(ii)
Fix $t_{\mathrm{dep}}<\delta<T$ and put $\eta=M(\delta)+N(\delta)>0$.  Every
phase that meets $[\delta,T]$, other than the phases containing $\delta$ and
$T$, is an interval contained in $(\delta,T)$ and hence, by (i), contains a
record time in $(\delta,T)$.  A positive record time in $(\delta,T)$ has
value at least $M(\delta)$ and a negative one has magnitude at least
$N(\delta)$.  Between two distinct phases of the same sign lies a phase of
the opposite sign (otherwise maximality would merge them), hence a record
time of the opposite sign.  Consequently, if we pick one record time in
$(\delta,T)$ from each of finitely many distinct such phases and list them in
temporal order, then between consecutive members the path travels from a
level $\geq M(\delta)$ to a level $\leq-N(\delta)$ or back, at variation
cost at least $\eta$ over disjoint time intervals.  Local bounded variation
bounds the size of every such family by $1+\Var(x;[\delta,T])/\eta$, so only
finitely many phases meet $[\delta,T]$.  Hence the phases are locally finite
in $(t_{\mathrm{dep}},\infty)$, countable, and temporally ordered like an
interval $K\subseteq\Z$.  Since $x$ reaches arbitrarily large levels of both
signs there are infinitely many phases after any given one, so $K$ is
unbounded above and every phase is bounded.  If $K$ is unbounded below,
local finiteness on every $[\delta,T]$ forces the phases, and their turn
times, to decrease to $t_{\mathrm{dep}}$.

\smallskip\noindent\emph{Last record time.}
Let $a$ be the supremum of the record times of a phase $J$; it is finite
because $J$ is bounded, and it is a record time of the sign of $J$ (either
attained, or by the limit claim).  It lies in $J$: it lies in the closure of
$J$ and has the sign of $J$, while if $a\notin J$ then $a=\sup J$ would
belong to the phase immediately following $J$, whose sign is opposite by
maximality.  So $a$ is the last record time of $J$.

\smallskip\noindent(iii) is maximality of phases, and $x(t_k)=\varepsilon_km_k$
because $t_k$ is a record time of sign $\varepsilon_k$.

\smallskip\noindent(iv)
During a negative phase no positive record time occurs, so $M$ is constant
there, and the turn magnitude of a positive phase is the value of $M$ at
the end of that phase; $M$ strictly increases from one positive phase to the
next because the later phase contains positive record times.  Unboundedness
follows since $x$ reaches every level.

\smallskip\noindent(v)
Between $t_{k-1}$ and $t_k$ the path travels from $\varepsilon_{k-1}m_{k-1}$
to $-\varepsilon_{k-1}m_k$.

\smallskip\noindent(vi)
$\sigma^{+}(r)$ is by definition a positive record time; the phase $j$
containing it is positive; $m_j=M(t_j)\geq M(\sigma^{+}(r))=r$; the phase
starts after the turn $t_{j-1}$ of the preceding phase, and $t_j$ is the last
record time of the phase; earlier positive turns are values of $M$ before
$\sigma^{+}(r)$, hence $<r$.  The negative case is symmetric.

\smallskip\noindent(vii)
A turn of magnitude $\leq X$ is the first passage time of a level of
magnitude $\leq X$, hence occurs no later than $\tau(X)<\infty$; by (v) each
turn of magnitude $\geq\varepsilon_0$ is preceded by a leg of variation
$\geq\varepsilon_0$, and these legs are disjoint, so there are at most
$\Var(x;[0,\tau(X)])/\varepsilon_0$ of them.  If $K$ is unbounded below,
(ii) gives $t_k\downarrow t_{\mathrm{dep}}$, and
$m_k\leq M(t_k)+N(t_k)\to0$.  For the last claim, if $K$ has a least element
prepend the departure point with magnitude zero and sum (v) over the disjoint
legs ending by $T$; if $K$ is unbounded below, perform the same sum from a
finite lower index and let that index tend to $-\infty$.

{\footnotesize
\bibliographystyle{abbrv}
\bibliography{references}
}

\end{document}